\newcommand{\B}{\ensuremath{\mathcal{B}}}
\newcommand{\C}{\ensuremath{\mathcal{C}}}
\newcommand{\D}{\ensuremath{\mathcal{D}}}
\newtheorem{theorem}{Theorem}
\newtheorem{corollary}[theorem]{Corollary}
\begin{document}

\title{A Problem Concerning Nonincident Points and Blocks in Steiner Triple Systems}
\author{Douglas R.\ Stinson%
\thanks{Research supported by NSERC grant 203114-2011}
\\David R.\ Cheriton School of Computer Science\\
University of Waterloo\\
Waterloo, Ontario N2L 3G1, Canada
}
\date{\today}

\maketitle

\begin{abstract}In this paper, we study the problem of finding the largest possible
set of $s$ points and $s$ blocks in a Steiner triple system of order $v$, such that that none
of the $s$ points lie on any of the $s$ blocks. We prove that $s \leq (2v+5 - \sqrt{24v+25})/2$.
We also show that equality can be attained in this bound for infinitely many values of $v$.
\end{abstract}

\section{Introduction}

This paper is a continuation of \cite{st}, where we studied 
the problem of finding the largest possible
set of $s$ points and $s$ lines in a projective plane of order $q$, such that that none
of the $s$ points lie on any of the $s$ lines. It was shown in \cite{st} that 
$s \leq 1+(q+1)(\sqrt{q}-1)$ and 
equality can be attained in this bound whenever $q$ is an even power of two, by utilising 
certain maximal arcs in the desarguesian plane PG$(2,q)$.

This problem can also be considered in other types of block designs, such
as BIBDs. Suppose $(X, \B)$ is a $(v,k,\lambda)$-BIBD.
For $Y \subseteq X$ and $\C \subseteq \B$, we say that
$(Y, \C)$ is a {\it nonincident} set of points and blocks 
if $y \not\in B$ for every $y \in Y$ and every $B \in \C$.

Maximal arcs have been studied in the setting of BIBDs (see, e.g., \cite{morgan}),
and it might seem plausible that maximal arcs in BIBDs might be of relevance to 
this problem. However, it turns out that things are a bit more complicated.

If we are going to study this problem for BIBDs, then what better place to start than
with Steiner triple systems? A {\it Steiner triple system of order $v$} (or STS$(v)$),
is a pair $(X, \B)$, where 
$X$ is the set of $v$ {\it points} and $\B$ is a set of $b = v(v-1)/6$ {\it blocks},
such that each block contains three points and every pair of points occurs in a unique block.
It is well-known that $v \equiv 1,3 \bmod 6$ is a necessary and sufficient condition for
the existence of an STS$(v)$.

A {\it maximal arc} in an STS$(v)$ consists of a subset $Y$ of  $(v+1)/2$ points such that every block
meets $Y$ in $0$ or $2$ points. When $v \equiv 3,7 \bmod 12$, STS$(v)$ containing maximal arcs can easily be constructed from 
the standard ``doubling construction'' (see, for example, \cite[\S 3.2]{CR}). 
The number of blocks disjoint from $Y$ is
\[ \frac{v(v-1)}{6} - \binom{(v+1)/2}{2} = \frac{v^2-4v+3}{24}.\]
For $v \equiv 3,7 \bmod 12$, $v \geq 19$, it is easy to see
that  $(v^2-4v+3)/24 > (v+1)/2$.
For these values of $v$, this implies that we can find $s = (v+1)/2$ 
points and nonincident blocks in an STS$(v)$ that contains a maximal arc.
However, it turns out that we can do better, and we will show that the optimal
value of $s$ is roughly $v - \sqrt{6v}$, for infinitely many values of $v$.

Define $f(v)$ to be the maximum integer $s$ such that there exists 
a nonincident set of $s$ points and $s$ blocks in some STS$(v)$. Equivalently,
$f(v)$ is the size of the largest square submatrix of zeroes in the incidence
matrix of any STS$(v)$.
We use a simple combinatorial argument to prove the upper bound $f(v) \leq 
(2v+5 - \sqrt{24v+25})/2$.
We also show that this bound is tight
for infinitely many values of $v$. This is done 
by taking $\C$ to be the blocks in a suitably chosen subsystem 
of the STS$(v)$ and letting $Y$ be the 
points not in this subdesign.

\section{Main Results}

\begin{theorem} 
\label{t1}
For any set $Y$ of $s$ points in an STS$(v)$,
the number of blocks disjoint from $Y$ is at most 
\[  \frac{v(v-1)+s^2 - s(2v-1)}{6} .\]
\end{theorem}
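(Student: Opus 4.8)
The plan is to pass to the complement. Let $(X,\B)$ be the STS$(v)$, and set $Z = X \setminus Y$, so that $|Z| = v-s$. A block $B \in \B$ is disjoint from $Y$ if and only if $B \subseteq Z$, so it suffices to bound the number of blocks that are contained in $Z$.

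The key observation is a one-way count of pairs inside $Z$. Every block $B \subseteq Z$ is a $3$-subset of $Z$ and hence determines $\binom{3}{2}=3$ unordered pairs of points of $Z$; moreover, because every pair of points of an STS lies in a \emph{unique} block, two distinct blocks contained in $Z$ determine disjoint sets of pairs. Since $Z$ contains only $\binom{v-s}{2}$ pairs in total, the number of blocks contained in $Z$ is at most $\binom{v-s}{2}/3$. It then remains to verify that this quantity equals the stated bound: writing $\binom{v-s}{2}/3 = \tfrac{(v-s)(v-s-1)}{6}$ and expanding $(v-s)(v-s-1) = (v-s)^2 - (v-s) = v(v-1) + s^2 - s(2v-1)$ gives exactly $\tfrac{v(v-1)+s^2-s(2v-1)}{6}$.

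There is essentially no obstacle here; this is the promised ``simple combinatorial argument''. The only point requiring care is that we are bounding blocks \emph{disjoint from} $Y$ (equivalently, blocks \emph{contained in} $Z$), so the relevant pair count is the one internal to $Z$, not internal to $Y$. As an alternative derivation one could instead let $n_i$ denote the number of blocks meeting $Y$ in exactly $i$ points ($0 \le i \le 3$) and set up the standard double-counting equations $\sum_i n_i = v(v-1)/6$, $\;n_1 + 2n_2 + 3n_3 = s(v-1)/2$, and $\;n_2 + 3n_3 = \binom{s}{2}$, together with the pair count $3n_0 + n_1 = \binom{v-s}{2}$ inside $Z$; eliminating and using $n_1 \ge 0$ yields the same inequality. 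The complement argument above is the most economical route, and the passage from $\binom{v-s}{2}/3$ to the displayed form is just elementary algebra.
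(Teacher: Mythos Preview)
Your proof is correct, and it takes a different (and cleaner) route than the paper. The paper works on the $Y$ side: it lets $c$ be the number of blocks meeting $Y$, sets up $\sum_C 1 = c$, $\sum_C |C| = rs$, $\sum_C \binom{|C|}{2} = \binom{s}{2}$ for the nonempty intersections $C = B\cap Y$, and then extracts a lower bound on $c$ from the quadratic inequality $\sum_C (|C|-2)(|C|-3)\ge 0$; subtracting from $b$ gives the stated upper bound. You instead work on the complement $Z = X\setminus Y$ and use the one-line packing bound: blocks contained in $Z$ form a partial triple system on $Z$, so there are at most $\binom{v-s}{2}/3$ of them, which you verify equals the displayed expression. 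Both arguments yield the same equality condition (no block meets $Y$ in exactly one point, i.e.\ $Z$ carries a sub-STS), so nothing is lost; your approach simply bypasses the auxiliary quadratic step and gets there more directly. The paper's formulation does have the mild advantage that it generalises mechanically to larger block sizes by adjusting the factors in the quadratic, whereas the pure pair-packing bound in $Z$ would need a different tweak for $k>3$.
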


\begin{proof}
Suppose that $(X, \B)$ is an STS$(v)$.
Denote $r = (v-1)/2$; then every point occurs in $r$ blocks in the STS$(v)$.
For a subset $Y\subseteq X$ of $s$ points, define $\B_Y = \{B \in \B : B \cap Y \neq \emptyset\}$
and define $\B'_Y = \B \setminus \B_Y$. Furthermore, for every $B \in \B_Y$, 
define $B_Y = B \cap Y$, and then define $\C = \{B_Y: B \in \B_Y\}$. Observe that
$\C$ consists of the nonempty intersections of the blocks in $\B$ with  the set $Y$.
Denote $c = |\C| = |\B_Y|$.

We will study the set system
$(Y,\C)$.  We have the following equations:
\begin{eqnarray*}
\sum_{C \in \C} 1 &=& c\\
\sum_{C \in \C} |C| &=& rs\\
\sum_{C \in \C} \binom{|C|}{2} &=& \binom{s}{2}.
\end{eqnarray*}
 From the above equations, it follows that 
\begin{eqnarray*}
\sum_{C \in \C} |C|^{2} &=& s(s+r-1).
\end{eqnarray*}
Now we compute as follows:
\begin{eqnarray*}
0 &\leq& \sum_{C \in \C} (|C|-2)(|C|-3) \\
&=& s(s+r-1) - 5rs + 6c ,
\end{eqnarray*}
from which it follows that
\begin{eqnarray*} c &\geq& \frac{s(4r+1) - s^2)}{6}\\
&=& \frac{s(2v-1) - s^2)}{6}.
\end{eqnarray*}
Therefore,
\begin{eqnarray*}
|\B'_Y| &=& b-c\\
& \leq & \frac{v(v-1)}{6} -  \frac{s(2v-1) - s^2)}{6} \\
&=& \frac{v(v-1)+s^2 - s(2v-1)}{6}.
\end{eqnarray*}
\end{proof}

\begin{corollary}
If there exists a nonincident set of $s$ points and $t$ blocks in an STS$(v)$, then 
\[ t \leq \frac{v(v-1)+s^2 - s(2v-1)}{6}.\]
\end{corollary}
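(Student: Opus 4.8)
The plan is to derive this immediately from Theorem~\ref{t1}. The key observation is that the defining property of a nonincident set $(Y,\C)$ --- namely that $y \notin B$ for every $y \in Y$ and every $B \in \C$ --- is precisely the assertion that each block $B \in \C$ satisfies $B \cap Y = \emptyset$. In other words, $\C$ is a collection of blocks every one of which is disjoint from the point set $Y$.

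First I would fix a nonincident set consisting of a set $Y$ of $s$ points and a set $\C$ of $t$ blocks in some STS$(v)$, say $(X,\B)$. Using the notation from the proof of Theorem~\ref{t1}, let $\B'_Y$ denote the set of all blocks of $\B$ disjoint from $Y$. By the observation above, $\C \subseteq \B'_Y$, so $t = |\C| \leq |\B'_Y|$. Theorem~\ref{t1} supplies $|\B'_Y| \leq (v(v-1) + s^2 - s(2v-1))/6$, and chaining the two inequalities gives the stated bound on $t$.

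There is essentially no obstacle to overcome: the corollary is just a repackaging of Theorem~\ref{t1}, and the only thing that needs checking --- the equivalence between the nonincidence condition and disjointness of each block in $\C$ from $Y$ --- is immediate from the definitions. One might add a remark that $s$ and $t$ enter the bound asymmetrically, which matters when one later specialises to $s = t$ and optimises over $s$; but that observation is not required for the corollary itself.
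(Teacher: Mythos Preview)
Your proposal is correct and matches the paper's approach exactly: the corollary is stated in the paper without a separate proof, precisely because it is the immediate consequence of Theorem~\ref{t1} that you describe, via the observation that every block in a nonincident set is disjoint from $Y$ and hence lies in $\B'_Y$.
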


Before proving our next general result, we look at a small example. 
In Figure \ref{plot-sts.fig}, we graph the functions
$(v(v-1)+s^2 - s(2v-1))/6$ and $s$ for $v=39$ and $s \leq v$. 
The point of intersection is $(26,26)$ and it is then easy to see that
$f(39) \leq 26$.

\begin{figure}
\caption{Nonincident points and lines when $v=39$}
\label{plot-sts.fig}
\begin{center}
\includegraphics[width=90mm]{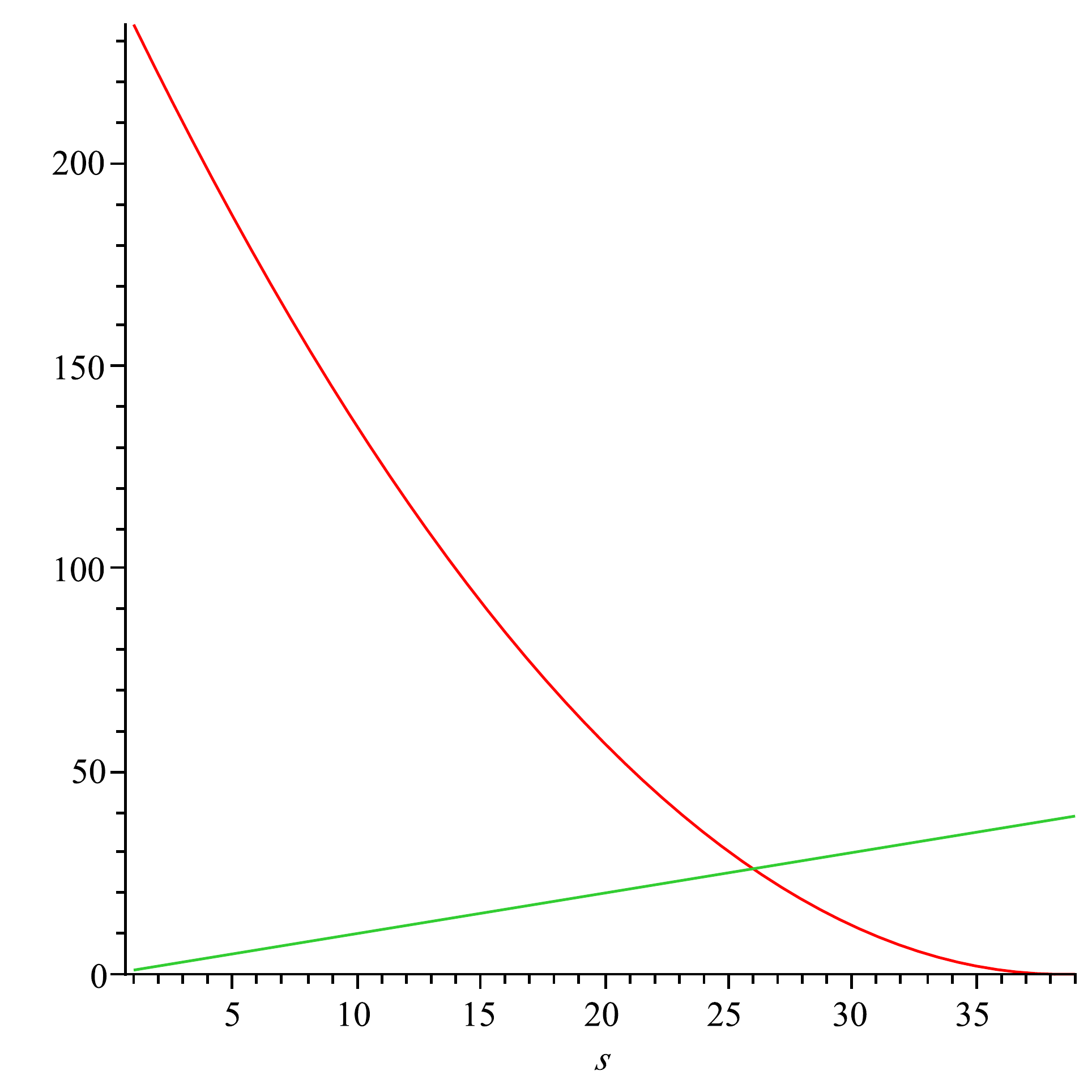}
\end{center}
\end{figure}

In general, it is easy to compute the  point of intersection of these two functions 
as follows:
\begin{eqnarray*} \frac{v(v-1)+s^2 - s(2v-1)}{6} = s &\Leftrightarrow & 
s^2 -(2v+5)s + v^2-v = 0
\\
&\Leftrightarrow & s = \frac{2v+5\pm \sqrt{24v+25}}{2}.
\end{eqnarray*}
Since $s<v$, the point of intersection occurs
when \[s= \frac{2v+5 - \sqrt{24v+25}}{2}.\]

The following result is now straightforward.

\begin{theorem} 
\label{main.thm} For any positive integer $v \equiv 1,3 \bmod{6}$,
it holds that $f(v) \leq \frac{2v+5 - \sqrt{24v+25}}{2}$.
\end{theorem}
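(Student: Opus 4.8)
The plan is to read off the result directly from the Corollary, treating it as the statement that the two curves $t = (v(v-1)+s^2-s(2v-1))/6$ and $t = s$ cross at the stated value of $s$. Concretely, suppose $f(v) = s$, so that some STS$(v)$ contains a nonincident set of $s$ points and $s$ blocks. Applying the Corollary with $t = s$ gives
\[ s \leq \frac{v(v-1)+s^2 - s(2v-1)}{6}, \]
which, after clearing the denominator and collecting terms, is equivalent to the quadratic inequality $s^2 - (2v+5)s + v^2 - v \geq 0$.

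The next step is to analyze this quadratic in $s$. Its discriminant is $(2v+5)^2 - 4(v^2-v) = 24v+25 > 0$, so it has the two distinct real roots $s_\pm = (2v+5\pm\sqrt{24v+25})/2$ that were already computed just before the theorem statement. Since the leading coefficient is positive, the inequality $s^2 - (2v+5)s + v^2 - v \geq 0$ holds precisely when $s \leq s_-$ or $s \geq s_+$. To finish, I would rule out the second branch: trivially $s \leq v$ (there are only $v$ points available to put in $Y$), whereas $s_+ = v + (5+\sqrt{24v+25})/2 > v$, so $s \geq s_+$ is impossible. Hence $s \leq s_- = (2v+5-\sqrt{24v+25})/2$, which is the claimed bound on $f(v)$.

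There is really no hard step here; the theorem is an immediate consequence of Theorem \ref{t1} (equivalently, the Corollary) combined with solving a quadratic. The main — and only — subtlety is the root selection: the quadratic inequality by itself permits both $s \le s_-$ and $s \ge s_+$, so one must explicitly discard the large branch using the crude bound $s \le v$. Everything else is routine algebra, and no combinatorial input beyond Theorem \ref{t1} is required.
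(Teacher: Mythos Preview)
Your proof is correct and follows essentially the same approach as the paper: apply Theorem~\ref{t1} (via its Corollary) with $t=s$, rewrite the resulting inequality as a quadratic in $s$, and use $s\le v$ to discard the larger root $s_+$. If anything, your treatment of the root selection is more explicit than the paper's, which handles it in the discussion preceding the theorem by noting ``since $s<v$, the point of intersection occurs when $s = (2v+5-\sqrt{24v+25})/2$.''
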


\begin{proof} Suppose there is a nonincident set of $s$ points and
$s$ blocks in an STS$(v)$. Theorem \ref{t1} 
implies that $s \leq (v(v-1)+s^2 - s(2v-1))/6$. However, for 
$s > (2v+5 - \sqrt{24v+25})/2$,
we have that $s > (v(v-1)+s^2 - s(2v-1))/6$, just as in the example considered above.
It follows that $s \leq (2v+5 - \sqrt{24v+25})/2$.
\end{proof}

Next, we examine the case of equality in Theorem \ref{t1}. 
This will involve {\it subsystems} of STS$(v)$, which we now define.
Suppose that $(X,\B)$ is an STS$(v)$. 
We say that $(Z,\D)$ is a {\it sub-STS$(w)$} of $(X,\B)$ if
$Z \subset X$, $\D \subset \B$ and
$(Z,\D)$ is an STS$(w)$. It is easy to see that an STS$(v)$ 
containing a sub-STS$(w)$ can exist only if $v \geq 2w+1$.

\begin{corollary}
\label{c3}
Suppose that $(X,\B)$ is an STS$(v)$ and 
suppose we have a set $Y\subset X$ of $s$ points such that 
the number of blocks in $\B$ disjoint from $Y$ is equal to
\[  \frac{v(v-1)+s^2 - s(2v-1)}{6} .\]
Then $(X\backslash Y, \B'_Y)$ is a sub-STS$(v-s)$ of $(X,\B)$,
where $\B'_Y$ denotes the blocks in $\B$ that are disjoint from $Y$.

Conversely, if $(Z,\C)$ is sub-STS$(w)$ of $(X,\B)$, then 
number of blocks in $\B$ disjoint from $X \backslash Z$ is equal to
$(v(v-1)+s^2 - s(2v-1))/6$, where $s = |X \backslash Z| = v-w$.
\end{corollary}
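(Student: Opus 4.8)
The plan is to revisit the short chain of (in)equalities in the proof of Theorem~\ref{t1} and pin down exactly when equality holds. The only inequality used there is $0 \le \sum_{C \in \C}(|C|-2)(|C|-3)$, a sum of integer terms. Since every block of an STS has size $3$, each $C = B \cap Y$ with $B \in \B_Y$ has $|C| \in \{1,2,3\}$, and $(|C|-2)(|C|-3)$ equals $0$ when $|C|\in\{2,3\}$ and equals $2$ when $|C|=1$. Hence the bound of Theorem~\ref{t1} is attained with equality if and only if no block of $\B$ meets $Y$ in exactly one point. It is also worth recording at the outset the algebraic identity $v(v-1)+s^2-s(2v-1) = (v-s)(v-s-1)$, so that the quantity in the statement is just $(v-s)(v-s-1)/6$, the number of blocks of an STS$(v-s)$.

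For the forward direction, assume the number of blocks disjoint from $Y$ equals $(v-s)(v-s-1)/6$, so by the above no block meets $Y$ in exactly one point. Take distinct $x, x' \in X \setminus Y$ and let $B$ be the unique block of $\B$ through them. If $B$ met $Y$ it would do so in exactly $|B|-2 = 1$ point, a contradiction; hence $B \subseteq X \setminus Y$, i.e.\ $B \in \B'_Y$. Thus every pair of points of $X\setminus Y$ lies in a block of $\B'_Y$, uniqueness being inherited from $\B$, and since each such block has size $3$ and $|X\setminus Y| = v-s$, the pair $(X\setminus Y,\B'_Y)$ is an STS$(v-s)$ — a sub-STS$(v-s)$ of $(X,\B)$ by construction (the block count $(v-s)(v-s-1)/6$ is then automatic and consistent with the hypothesis).

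For the converse, let $(Z,\C)$ be a sub-STS$(w)$ of $(X,\B)$ and set $Y = X\setminus Z$, $s = v-w$. The key claim is that the blocks of $\B$ disjoint from $Y$ are precisely the blocks of $\C$: each block of $\C$ lies in $Z$, hence is disjoint from $Y$; conversely, if $B\in\B$ with $B\cap Y = \emptyset$ then $B\subseteq Z$, so $B$ contains a pair of points of $Z$, which lies in a unique block of $\C \subseteq \B$, and uniqueness in $\B$ forces that block to be $B$, so $B\in\C$. Therefore the number of blocks of $\B$ disjoint from $Y$ is $|\C| = w(w-1)/6 = (v-s)(v-s-1)/6 = (v(v-1)+s^2-s(2v-1))/6$, as required.

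I do not anticipate a real obstacle here: all the mathematical content sits in the equality analysis of Theorem~\ref{t1}, and the only things to be careful about are the bookkeeping identification of ``block disjoint from $Y$'' with ``block contained in $X\setminus Z$'' and the degenerate case $s=0$ (where $X\setminus Y = X$ and the statement is trivially true), so that the corollary is literally correct for every admissible value of $s$.
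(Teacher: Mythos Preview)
Your proposal is correct and follows essentially the same approach as the paper: analyze when equality holds in the single inequality $\sum_C (|C|-2)(|C|-3)\ge 0$ from Theorem~\ref{t1}, deduce that no block meets $Y$ in exactly one point, and conclude that $(X\setminus Y,\B'_Y)$ is a sub-STS$(v-s)$; for the converse, count the blocks of the subsystem and match it to the formula via the identity $(v-s)(v-s-1)=v(v-1)+s^2-s(2v-1)$. Your write-up is in fact slightly more careful than the paper's in two spots --- you explicitly evaluate $(|C|-2)(|C|-3)$ on $\{1,2,3\}$, and in the converse you verify directly that every block disjoint from $X\setminus Z$ actually lies in $\C$ (the paper just exhibits $\C$ and checks the count, tacitly invoking the upper bound of Theorem~\ref{t1} for the reverse inequality).
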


\begin{proof}
 From the proof of Theorem \ref{t1}, it is easy to see that equality holds if and
only if every block in $\B_Y$ meets $Y$ in exactly $2$ or $3$ points. This means
that no block contains two points of $X\backslash Y$, and hence 
$(X\backslash Y, \B'_Y)$ is a sub-STS$(v-s)$ of $(X,\B)$

Conversely, suppose that $(Z,\C)$ is sub-STS$(w)$ of $(X,\B)$. $\C$ consists
of $w(w-1)/6$ blocks, and there are $v-w$ points in $X \backslash Z$.
The blocks in $\C$ are all disjoint from $X \backslash Z$, so it suffices to
verify that
\[ \frac{v(v-1)+(v-w)^2 - (v-w)(2v-1)}{6} = \frac{w(w-1)}{6}.\]
This is an easy computation.
\end{proof}

Our goal is to determine the integers $v$ such that $f(v) = (2v+5 - \sqrt{24v+25})/2$,
i.e., where the bound in Theorem \ref{main.thm} is met with equality.
In view of Corollary,\ref{c3}, this can happen if and only if there is an STS$(v)$ containing
a sub-STS$(v-s)$, where $s= (2v+5 - \sqrt{24v+25})/2$. 
 Therefore, we next determine the integers $v$ such that the following conditions
are satisfied:
\begin{enumerate}
\item $v \equiv 1,3 \bmod 6$
\item 
$s = (2v+5 - \sqrt{24v+25})/2$ is an integer
\item $v-s \equiv 1,3 \bmod 6$, and
\item $v \geq 2(v-s)+1$.
\end{enumerate}

First, condition 2.\ implies  that $24v+25$ is a perfect square, 
say $24v+25 = t^2$. Then we have \[ v= \frac{t^2 - 25}{24}  \quad \text{and} \quad   s = v - \frac{t-5}{2}.\]
Observe that $v$ is an integer only when $t \equiv 1,5 \bmod 6$.

First, suppose $t \equiv 1 \bmod 6$ and write $t = 6u+1$.
It is then easy to see that
\[ v = \frac{3u^2+u -2}{2} \quad \text{and} \quad s = \frac{3u^2-5u +2}{2}.\] 
Now, we consider requirements 1.\ and 3. A straightforward
calculation shows that these conditions are satisfied
if and only if $u \equiv 1,5 \bmod 12$. If we let $u = 12z+1$, then we get
\begin{equation}
\label{case1} v = 216z^2 + 42z + 1 \quad \text{and} \quad s = 216z^2 +6z,
\end{equation} 
while if $u = 12z+5$, we have
\begin{equation}
\label{case2} v = 216z^2 + 186z + 39 \quad \text{and} \quad s = 216z^2 + 150z +26.
\end{equation}

The case $t \equiv 5 \bmod 6$ is handled in a similar way. 
We can write $t = 6u-1$ and then we compute  
\[ v = \frac{3u^2-u -2}{2} \quad \text{and} \quad s = \frac{3u^2-7u +4}{2}.\] 
Here it turns out that requirements 1.\ and 3.\ are satisfied
if and only if $u \equiv 4,8 \bmod 12$. 
If we let $u = 12z+4$, then we get
\begin{equation}
\label{case3}  v = 216z^2 + 138z + 21 \quad \text{and} \quad s = 216z^2 + 102z + 12,
\end{equation} 
while if $u = 12z+8$, we have
\begin{equation}
\label{case4}  v = 216z^2 + 282z + 91 \quad \text{and} \quad s = 216z^2 + 246z +70.
\end{equation}

In all four cases (\ref{case1}), (\ref{case2}), (\ref{case3}) and (\ref{case4}), 
it is easy to see that condition 4.\ is automatically satisfied.
It follows that 
these four cases  are the only situations 
where it is possible to have equality in Theorem \ref{main.thm}. We now show that the
desired designs exist in these cases, by making use of the following well-known
result first proven in \cite{DW}.

\begin{theorem}[Doyen-Wilson Theorem]
\label{DW-thm}
There exists an STS$(v)$ containing a
sub-STS$(w)$ if and only if $v \geq 2w+1$, $v,w \equiv 1,3 \bmod 6$.
\end{theorem}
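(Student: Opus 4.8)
The plan is to prove necessity directly and reduce sufficiency to a graph-decomposition question that I would settle by an explicit construction together with a short case analysis.

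For necessity, observe first that an STS$(v)$ or an STS$(w)$ exists only when the order is $\equiv 1,3 \bmod 6$, so those two congruences are forced. For the bound $v \ge 2w+1$, suppose $(Z,\D)$ is a sub-STS$(w)$ of an STS$(v)$ $(X,\B)$ and pick any point $x \in X \setminus Z$. The $(v-1)/2$ blocks through $x$ partition the remaining $v-1$ points into pairs, and no such pair can lie inside $Z$: a pair of points of $Z$ already lies in a block of $\D \subseteq \B$, and it cannot also lie in a distinct block through $x$. Hence each of the $w$ points of $Z$ is paired with a point outside $Z \cup \{x\}$, so $w \le v - w - 1$, i.e.\ $v \ge 2w+1$.

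For sufficiency I would fix disjoint sets $Z$ and $N$ with $|Z| = w$ and $|N| = n := v - w$, put $X = Z \cup N$, and note that $n$ is even with $n \ge w+1$. A block of the target STS$(v)$ that is not contained in $Z$ meets $Z$ in at most one point (otherwise it would re-cover a pair inside $Z$), so it has the form $\{z,x,y\}$ with $z \in Z$, $x,y \in N$, or lies entirely in $N$. Counting the pairs through each $z \in Z$ then shows that constructing the STS$(v)$ is equivalent to supplying: a copy of STS$(w)$ on $Z$ (which exists by hypothesis); for each $z \in Z$ a perfect matching $M_z$ of $N$, the $M_z$ being pairwise edge-disjoint; and a triangle decomposition of the graph obtained from $K_N$ by deleting $\bigcup_{z \in Z} M_z$. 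The blocks of the STS$(v)$ are then the blocks of the subsystem on $Z$, the triples $\{z\} \cup e$ with $e \in M_z$, and the triangles. Thus the whole theorem reduces to the claim that, whenever $n$ is even, $n \ge w+1$, $w \equiv 1,3 \bmod 6$ and $w+n \equiv 1,3 \bmod 6$, the edge set of $K_n$ can be partitioned into $w$ perfect matchings together with triangles.

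To prove that claim I would separate cases by the size of $n$ relative to $w$. When $n = w+1$, a $1$-factorization of $K_{w+1}$ supplies the $w$ matchings with no edges left over, recovering the standard doubling construction of an STS$(2w+1)$ from an STS$(w)$. For larger $n$ I would build the matchings and the leftover triangles on a cyclic point set such as $\mathbb{Z}_{n-1} \cup \{\infty\}$ by the method of differences, treating the residue of $n$ modulo $6$ separately; when $n$ is much larger than $w$ I would instead glue in a group-divisible or transversal design with block size $3$ on the new points, so that only a bounded amount of structure near $Z$ has to be handled directly, and finish with a finite list of explicitly constructed small orders. The hard part will be exactly this last step: because the points of $Z$ have degree only $n$ in the graph to be decomposed, one cannot simply invoke a generic triangle-decomposition or Wilson-type asymptotic result, and the regime $v \approx 2w+1$ leaves very little slack, so a careful case analysis over the residues of $v$ and $w$ together with a handful of base cases is what makes the argument go through. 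Everything else is bookkeeping.
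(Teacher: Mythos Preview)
The paper does not prove this statement at all: it is quoted verbatim as the Doyen--Wilson Theorem and attributed to \cite{DW}, with no argument given. So there is no ``paper's own proof'' to compare against; the paper simply invokes the result as a black box.

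As for the content of your attempt: the necessity direction is complete and correct --- the pairing argument through a point $x \notin Z$ is the standard one and gives $v \ge 2w+1$ cleanly. Your reduction of sufficiency to the graph-decomposition statement (partition $K_n$ into $w$ perfect matchings plus triangles, for the appropriate $n$ and $w$) is also correct and is exactly the framework Doyen and Wilson use. However, what you have written is a \emph{plan}, not a proof: you say you ``would build the matchings and the leftover triangles \dots\ by the method of differences,'' you ``would instead glue in a group-divisible or transversal design,'' and you explicitly flag that ``the hard part will be exactly this last step.'' None of that hard part is actually carried out. The Doyen--Wilson paper is precisely the execution of the case analysis you allude to, and it is not short; the tight regime $v$ close to $2w+1$ genuinely requires the detailed constructions you gesture at. So your proposal correctly identifies the architecture of the proof but leaves the substantive work undone --- which, given that the present paper also declines to prove the theorem and merely cites it, is arguably the appropriate level of detail here, but it should be labeled as a sketch or a citation rather than a proof.
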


The above discussion and Theorem \ref{DW-thm} immediately imply
our main existence result.

\begin{theorem} 
\label{main2.thm} Suppose $v \equiv 1,3 \bmod{6}$ is a positive integer. 
Then $f(v) = \frac{2v+5 - \sqrt{24v+25}}{2}$
if and only if 
\[ v \in \{ 216z^2 + 42z + 1, 216z^2 + 186z + 39, 216z^2 + 138z + 21, 216z^2 + 282z + 91\} ,\]
where $z$ is a non-negative integer.
\end{theorem}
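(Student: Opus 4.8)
The plan is to combine the upper bound from Theorem~\ref{main.thm} with the characterisation of equality in Corollary~\ref{c3}, and then invoke the Doyen--Wilson Theorem to settle existence. By Corollary~\ref{c3}, equality $f(v) = (2v+5-\sqrt{24v+25})/2$ holds precisely when there is an STS$(v)$ containing a sub-STS$(v-s)$ with $s = (2v+5-\sqrt{24v+25})/2$. So the task reduces to determining exactly which $v \equiv 1,3 \bmod 6$ admit such a configuration. By Theorem~\ref{DW-thm}, the sub-STS exists iff $v \geq 2(v-s)+1$ and both $v$ and $v-s$ are congruent to $1$ or $3 \bmod 6$. Hence I must show that the set of $v$ satisfying conditions 1--4 listed in the excerpt is exactly the union of the four quadratic families in equations (\ref{case1})--(\ref{case4}).

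First I would handle the necessity of $24v+25$ being a perfect square: since $s$ must be an integer (it equals $f(v)$), condition 2 forces $24v+25 = t^2$ for some positive integer $t$, giving $v = (t^2-25)/24$ and $s = v - (t-5)/2$. Reducing modulo small numbers, $t^2 \equiv 25 \bmod 24$ forces $t \equiv \pm 1 \bmod 6$, so one splits into the cases $t = 6u+1$ and $t = 6u-1$. In each case I would substitute to get $v$ and $s$ as explicit quadratics in $u$, then impose conditions 1 and 3 ($v \equiv 1,3 \bmod 6$ and $v-s \equiv 1,3 \bmod 6$) as congruence conditions on $u$ modulo $12$; the calculation shows these hold iff $u \equiv 1,5 \bmod 12$ in the first case and $u \equiv 4,8 \bmod 12$ in the second. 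Writing $u$ in terms of a new parameter $z$ in each of the four residue classes and substituting once more yields exactly the four families (\ref{case1})--(\ref{case4}). Finally, condition 4 ($v \geq 2(v-s)+1$, i.e.\ $v \geq 2s+1$) must be checked to hold automatically in all four families — this is a routine inequality comparison of leading and lower-order coefficients, valid for all $z \geq 0$.

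With the four families identified, the forward direction (these $v$ give equality) follows immediately: for such $v$ all of conditions 1--4 hold, so Theorem~\ref{DW-thm} produces an STS$(v)$ with a sub-STS$(v-s)$, and Corollary~\ref{c3} says this attains the bound of Theorem~\ref{main.thm}, whence $f(v) = (2v+5-\sqrt{24v+25})/2$. Conversely, if $f(v)$ equals the bound, then by Corollary~\ref{c3} the required sub-STS exists, so by the Doyen--Wilson Theorem conditions 1--4 all hold, placing $v$ in one of the four families by the classification above. This gives the ``if and only if''.

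I do not anticipate a serious obstacle here — the argument is essentially bookkeeping once Theorems~\ref{main.thm} and~\ref{DW-thm} and Corollary~\ref{c3} are in hand. The one step requiring genuine care is the modular analysis: one must be sure that the congruence conditions on $u$ are \emph{necessary and sufficient} (not merely sufficient) for conditions 1 and 3, so that no values of $v$ are missed, and one must not double-count — I would check that the four families are pairwise disjoint (they arise from distinct residues of $u \bmod 12$ combined with distinct signs in $t = 6u \pm 1$) so the stated union is clean. The verification that condition 4 is automatic is the only inequality in the argument and is the sort of thing that is ``easy to see'' but should be spelled out at least once.
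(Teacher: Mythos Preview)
Your approach is correct and essentially identical to the paper's: reduce equality in Theorem~\ref{main.thm} to the existence of a sub-STS$(v-s)$ via Corollary~\ref{c3}, translate into the four arithmetic conditions, solve the resulting congruences on $t$ and then $u$ to obtain the four quadratic families, and invoke the Doyen--Wilson Theorem for existence. One small slip to fix when you write it out: condition~4, $v \geq 2(v-s)+1$, simplifies to $2s \geq v+1$, not to $v \geq 2s+1$ as you wrote; it is the former inequality that holds automatically for the four families.
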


The three smallest cases where $f(v)$ attains its optimal value are 
when $v=21$, $s = 12$; $v=39$, $s=26$; and $v=91$, $s=70$.

\end{document}